\begin{document}

\renewcommand*{\proofname}{\bf Proof}
\newtheorem{thm}{Theorem}
\newtheorem{utv}{Statement}
\newtheorem{cor}{Corollary}
\newtheorem{lem}{Lemma}
\theoremstyle{definition}
\newtheorem{defin}{Definition}
\theoremstyle{remark}
\newtheorem{rem}{\bf Remark}

\sloppy
\righthyphenmin=2
\exhyphenpenalty=10000
\binoppenalty=8000
\relpenalty=8000

\def\q#1.{{\bf #1.}}
\def\N{\mathbb N}
\def\Z{\mathbb Z}
\def\Q{\mathbb Q}
\def\R{\mathbb R}
\def\Pp{\mathbb P}
\def\C{{\mathbb C}}

\title{On gluing a surface of genus~$g$ from two and three polygons}
\author{A.\,V.\,Pastor
}
\date{}
\maketitle

\section{Introduction}

\subsection{Gluing together polygons into a  surface}

Consider  $k$ disks $D_1,D_2,\ldots,D_k$. Let $2n$ points be marked on the boundary circles $D_1$,\dots, $D_k$ such that at least one point is marked on each circle. 
We fix on  each circle~$D_i$ a  counterclockwise orientation. The marked points divide the circle~$D_i$ into several arcs (which do not contain marked points). One of these arcs  is marked with the number~$i$.

Thus we have $2n$ arcs on $k$ circles. We divide these arcs into pairs and glue together correspondent arcs  such that two arcs in each pair are oppositely oriented.  We obtain as a result of gluing a compact orientable surface without boundary. This surface can be disconnected. 
Points marked on the circles and arcs of these circles form a graph drawn on obtained surface.

In what follows we call  disks  $D_1,D_2,\ldots,D_k$ by  {\em polygons}. Then  marked points are  {\em vertices}  and arcs of our circles   are {\em edges} of these polygons. Let  a  disk with exactly $m$  marked points on its boundary circle be called $m$-gon.  We 
allow polygons with one and two vertices. For each polygon we fix  the {\em first} and the {\em last} vertex (in counterclockwise orientation).

Set the following notations. Let $m_1,\ldots,m_k\in\N$, $m_1+\ldots+m_k=2n$ and $D_1,\ldots,D_k$ be polygons containing $m_1,\ldots,m_k$ edges, respectively. 
We denote by~$\varepsilon_g(m_1,\ldots,m_k;k)$ the number of ways to glue these $k$ polygons  into a connected orientable surface of genus $g$
(i.e. the number of ways to divide all  $2n$ edges of these polygons into pairs  and glue together edges in each pair such that a connected orientable surface of genus $g$ will be obtained).
Let $$\varepsilon_g(n,k)=\sum_{m_1+\ldots+m_k=2n}\varepsilon_g(m_1,\ldots,m_k;k).$$
That is  $\varepsilon_g(n,k)$ is the number of ways to glue together $k$ polygons, that have (together)  $2n$ edges,  into a  connected orientable surface of genus $g$.  

For $n=0$ we set $\varepsilon_0(0,1)=1$ and $\varepsilon_g(0,k)=0$ if $g+k>1$.

Following the notations from the papers~\cite{APRW1} and~\cite{APRW2}, we set
   $$
   \mathbf{C}_g^{[k]}(z) = \sum_{n\ge0}\varepsilon_g(n,k)z^n.
   $$
For $k=1$ we will write $\varepsilon_g(n)$ and $\mathbf{C}_g(z)$ instead of $\varepsilon_g(n,1)$  and $\mathbf{C}_g^{[1]}(z)$, respectively.

\bigskip

Gluings of  a surface of genus~$g$ from one polygon were for the first time considered by J.\,Harer and D.\,Zagier~\cite{HZ}.  In this paper the numbers of such gluings   were used to  calculate Euler's characteristic of moduli space. The following recurrence formula for the numbers $\varepsilon_g(n)$ was proved in~\cite{HZ}:
\begin{equation}
  \varepsilon_g(n)=\frac{2n-1}{n+1}(2\varepsilon_g(n-1)+(n-1)(2n-3)\varepsilon_{g-1}(n-2)).
  \label{eq:eps}
\end{equation} 

Many proofs of formula~\eqref{eq:eps} are known now. In the paper~\cite{GN} a bijective proof for this formula was given.

One can find explicit formulas for the numbers~$\varepsilon_g(n)$for small~$g$ in the papers~\cite{GSh} and~\cite{Adr}. 

Generating functions $\mathbf{C}_g(z)$ and $\mathbf{C}_g^{[2]}(z)$were considered in the papers~\cite{APRW1}  and~\cite{APRW2}. It was proved in~\cite{APRW1} that for~$g>0$ 
\begin{equation}
\label{eq:C1}
 \mathbf{C}_g(z)=\frac{P_g(z)}{(1-4z)^{3g-\frac{1}{2}}},
\end{equation}
where $P_g(z)$ is a polynomial with integer  	coefficients of degree at most~$3g-1$, divisible by~$z^{2g}$. In addition, $P_g({1\over 4})\ne0$.

For $g=0$ it is well known that $\varepsilon_0(n)$ is  $n$-th Catalan number, hence,
\begin{equation}
\label{eq:catalan}
 \mathbf{C}_0(z)=\frac{1-\sqrt{1-4z}}{2z}.
\end{equation}

In the paper~\cite{APRW2} the similar formula for the generating function $\mathbf{C}_g^{[2]}(z)$ was obtained. Namely, it was proved, that for $g\ge0$
\begin{equation}
\label{eq:C2}
 \mathbf{C}_g^{[2]}(z)=\frac{P_g^{[2]}(z)}{(1-4z)^{3g+2}},
\end{equation}
where $P_g^{[2]}(z)$ is a polynomial with integer  coefficients of degree at most $3g+1$, divisible by~$z^{2g+1}$. In addition, $P_g^{[2]}({1\over 4})>0$.
Moreover,   the polynomial~$P_g^{[2]}(z)$ was expressed in terms of polynomials~$P_h(z)$. In~\cite{APRW2} explicit formulas for the numbers $\varepsilon_0(n,2)$, $\varepsilon_1(n,2)$ and $\varepsilon_2(n,2)$ were obtained as a consequence.

Another proof of the explicit formulas for~$\varepsilon_0(n,2)$ and $\varepsilon_1(n,2)$ was presented in~\cite{PR}. Also in~\cite{PR} the explicit formula for the numbers~$\varepsilon_0(n,3)$ was proved.

Similar values depending from greater number of parameters were considered in some papers. For example, in~\cite{GS} the numbers~$a_{p,q,k}^{(s)}$ were considered. This numbers can be interpreted as the number of ways to glue a  $p$-gon and a $q$-gon such that
exactly~$s$ pairs of edges from distinct polygons are glued together and a  surface  with a graph on $k$ vertices is obtained. 
A formula for the generating function $A_{p,q}^{(s)}(x)=\sum_{k\ge1}a_{p,q,k}^{(s)}x^k$ was obtained in~\cite{GS}, but this formula is tedious. No explicit or recurrence formula for the numbers $a_{p,q,k}^{(s)}$ was derived from this formula for generating functions. 

In the paper~\cite{AAPZ} the numbers $N_{g,k}^l(\pmb{b},\boldsymbol\ell,\pmb{n})$, were considered, where
$\pmb{b}=(b_1,b_2,\ldots)$, $\boldsymbol\ell=(\ell_1,\ell_2,\ldots)$ and $\pmb{n}=(n_0,n_1,\ldots)$ are sequences of nonnegative integers that satisfy some conditions. The number $N_{g,k}^l(\pmb{b},\boldsymbol\ell,\pmb{n})$ can be interpreted as the number of ways to glue together several polygons into a connected orientable surface of genus~$g$ with boundary,  where~$k$ is the number of pairs of edges which are glued together and~$l$ is the number if edges that are not glued. In addition, the following condition must hold:  for all~$i$ the number of $i$-gons in the collection of polygons we glue together is equal to~$b_i$ and the number of connected components of the boundary, which contain exactly~$i$ edges, is equal to~$n_i$. Moreover, the number of vertices that lie on the boundary must be equal to~$n_0$: they are considered as components with 0 edges.  The parameter~$\ell_i$ is defined in more complicated way: it is equal to the number of connected components of the boundary and vertices that do not lie on the boundary, for which the following value is equal to~$i$. For a vertex that does not lie on the boundary this value is the sum of its degree and the number of marked vertices of the polygons, which gluing form this vertex (one vertex is marked in each polygon). For a connected component of the boundary we contract this component to a vertex (all edges of the boundary are deleted after this operation) and count for the  obtained vertex the same value as above. Some formulas for these numbers were proved in the paper~\cite{AAPZ}.

In this paper we give an elementary proof for the formula~\eqref{eq:C2} and obtain a similar formula for~$\mathbf{C}_g^{[3]}(z)$. As a consequence we obtain an explicit formula for~$\varepsilon_1(n,3)$.

\subsection{Equivalent reformulations}

The problem of counting gluings of a surface of genus~$g$ from $k$ polygons has some equivalent reformulations. We consider such reformulations that will be useful in our paper. One can see other reformulations (in particular, the counting of chord diagrams) in the papers~\cite{APRW1,APRW2,AAPZ}.

\subsubsection{Counting   marked maps}

\begin{defin}
We call by a {\em map} an ordered pair $(X,G)$, where~$G$ is a finite undirected graph (maybe, with loops and multiple edges), embedded into a compact orientable surface~$X$ without a boundary such that all connected components of the set $X \setminus G$ (named {\em faces} of this map) are homeomorphic to disks. 

Two maps $(X,G)$ and $(X',G')$ are {\em isomorphic}, if there is a homeomorphism $f: X \to X'$ which preserves orientation, such that $f(G) = G'$.

The {\em genus} of the map~$(X,G)$ is the genus of the surface~$X$.
\end{defin}

Many facts on maps (in particular, about connection between maps and  permutations) can be found in~\cite{CM}. We recall the notions that are necessary for our paper.

It is easy to see that the procedure of gluing a surface from $k$ polygons described above gives a map with $n$ edges and  $k$ faces. Faces of the obtained map correspond to polygons which were glued. We enumerate the faces with integers from 1 to $k$ as the correspondent polygons. Any edge of a map corresponds to two edges of polygons. Assign to each edge of our map a pair of oppositely oriented arcs.
We say that an arc~$e$ {\em belongs} to a face~$F$, if $e$ lies on the boundary of~$F$ and is oriented in the  counterclockwise orientation of this boundary. Then we orient  edges of polygons counterclockwise and obtain an essential bijection between the set of arcs of the map and the set of all edges of polygons. The arcs of the map which correspond to the marked edges of polygons we mark with the same numbers.

Thus we obtain a map with~$n$ edges and $k$ faces, these faces are enumerated with integers from~1 to~$k$ and for each face some arc that belongs to it is marked with the  number of the face. We say that such map is {\em marked}.
Two marked maps $(X,G)$ and $(X',G')$ are {\em isomorphic}, if there is a homeomorphism $f: X \to X'$, such that $f(G) = G'$, which preserves orientation and marks on the arcs.

Note that the procedure of gluing described above defines the map uniquely up to isomorphism.
Any map can be uniquely up to isomorphism glued from some collection of polygons. 
Thus   $\varepsilon_g(n,k)$ can be interpreted as the number of marked maps with $n$ edges and $k$ faces on a  connected surface of genus~$g$. Under this interpretation the number $\varepsilon_0(0,1)$ corresponds to a sphere with one marked point on it.

In what follows the number of  marked maps  is counted up to isomorphism, i.e. we count the number of classes of isomorphic marked maps. Similarly, speaking about marked maps of some type we mean classes of isomorphic maps of this type.

\begin{rem}
\label{rem:zero}
By Euler's formula, the number of vertices of a map is equal to  $n-k+2-2g$. It must be positive. Hence, for $n<k+2g-1$  we have $\varepsilon_g(n,k)=0$.
\end{rem}

\subsubsection{Maps end permutations}

Consider a map $(X,G)$, where the  graph $G$ has no isolated vertices. Assign to each edge of~$G$ a pair of oppositely directed arcs and denote the set of obtained arcs by~$A$. 

We set a cyclic order of outgoing arcs for every vertex of the graph~$G$.
As a result, a permutation $\sigma$ on the set $A$ of all arcs of the graph~$G$ is obtained: $\sigma(e)$ is the next (in the cyclic order) arc outgoing from the beginning of an arc $e$. 

We also define  permutations $\iota,\tau\in S(A)$ as follows. Let $\iota(e)$ be the arc opposite to~$e$, i.e. correspondent to the same edge of $G$ but oppositely oriented. Let $\tau=\sigma\iota$. Since $\iota^2=1$, we also have  $\sigma=\tau\iota$.

\begin{rem}
\label{rem:face}
Let $\sigma,\iota,\tau$ be the permutations on the set of arcs~$A$ of the map $(X,G)$ defined above,  $F$ be a face of the map $(X,G)$ and an arc $e\in A$ belong to~$F$. Then the arc~$\tau(e)$ also belongs to~$F$ and is the next arc for~$e$ in the counterclockwise walk around the boundary of~$F$ (see figure~\ref{ris:tau}).

Hence we call the arc $\tau(e)$ by the {\em next} arc  for~$e$, and the arc $\tau^{-1}(e)$ by the  {\em previous} arc for~$e$. 
Note that cycles of the permutation~$\sigma$ correspond to vertices of the map $(X,G)$,  cycles of the permutation~$\iota$ correspond to edges of the map $(X,G)$ and cycles of the permutation~$\tau$ correspond to faces of the map $(X,G)$. 
\end{rem}

\noindent
\begin{figure}[!hb]
\centerline{\includegraphics{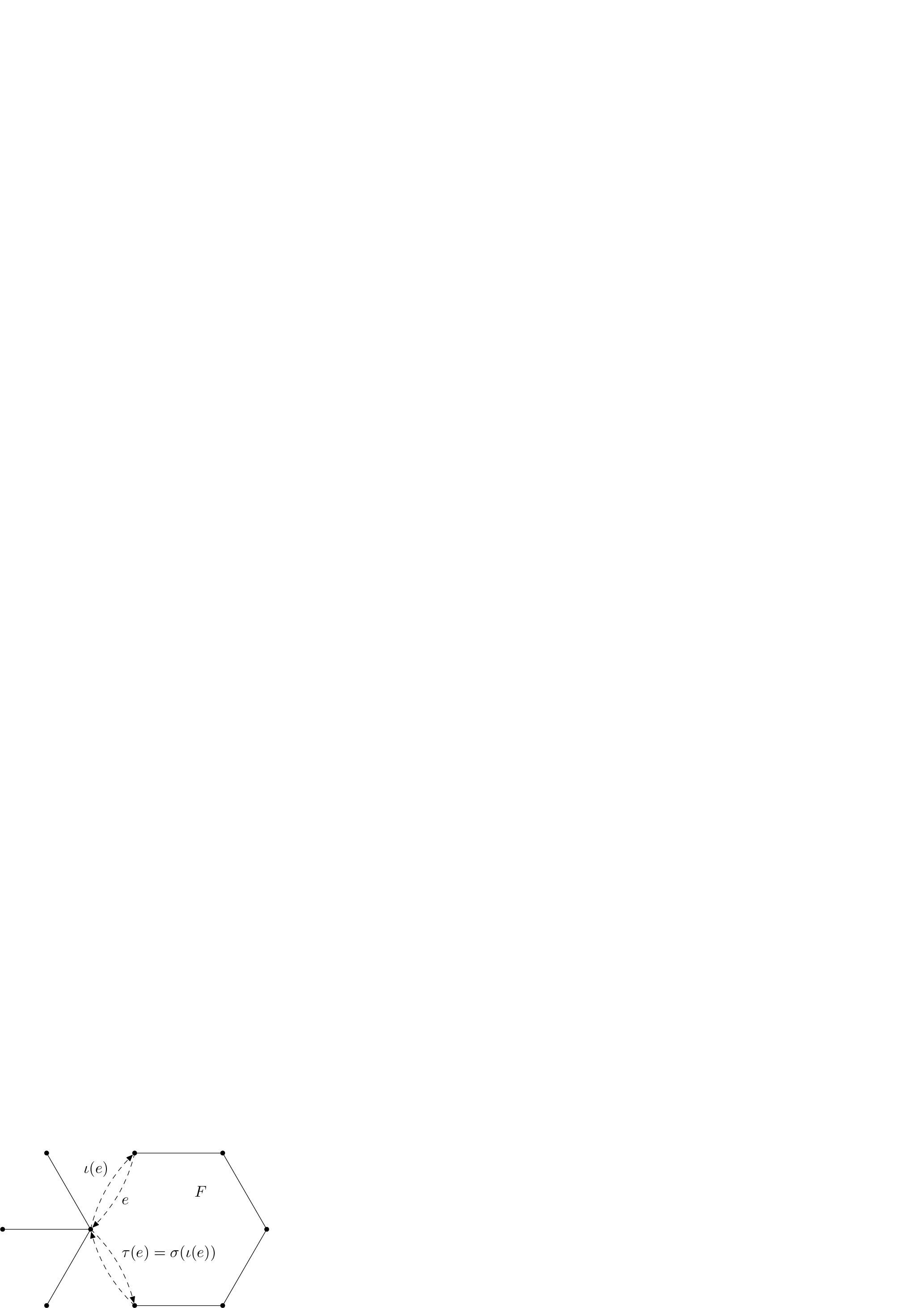}}
\caption{disposition of the arcs~$e$ and~$\tau(e)$. Edges are drawn  by solid lines and arcs --- by dotted lines.}
\label{ris:tau}
\end{figure}

\begin{rem}
\label{rem:map_permut}
Let $(A,\sigma,\iota)$ be ordered   triple,  where~$A$ is a set with even number of elements,  $\sigma,\iota$ are permutations on this set such that all cycles of~$\iota$ have length~2. It is well known (see, for example,~\cite{CM}), that for any such triple there exists a unique up to isomorphism map without isolated vertices, for which~$A$  is the set of arcs and~$\sigma$, $\iota$ are permutations defined above. 

Note that since  $\sigma=\tau\iota$, the triple $(A,\iota,\tau)$ also defines unique up to isomorphism map without isolated vertices (cycles of~$\iota$ correspond to edges and   cycles of~$\tau$ correspond to faces of this map).
\end{rem}

Now we can write down the equivalent formulation of the problem on counting marked maps.  Let's enumerate arcs of a marked map with integers from 1 to $2n$ in the following essential order:
at first we enumerate arcs of the face 1, beginning at the arc marked with 1 in the counterclockwise cyclic order, then we similarly enumerate arcs of the face~2, and so on.
Thus the permutations $\iota$ and $\tau$ defined above act on the set~$\{1,2,\ldots,2n\}$. 
Moreover,  $\iota$ consists of  $n$ independent transpositions and $\tau$  consists of  $k$ independent cycles, such that elements of each cycle are successive integers.  By remark~\ref{rem:map_permut}  such  pair of permutations defines up to isomorphism a map which edges are enumerated with integers from~1 to~$2n$ in the order defined above.  Hence the problem on counting marked maps is equivalent to the problem on counting pairs of permutations of type described above.

Cycles of the permutation~$\sigma=\tau\iota$ correspond to vertices of the map that we construct, hence, their number is $n-k+2-2g$. 
Thus $\varepsilon_g(n,k)$ is equal to the number of pairs of permutations $(\iota,\tau) \in S_{2n}\times S_{2n}$ such that
$\iota$ consists of $n$ cycles of length~2, the  group  $\langle\iota,\tau\rangle$ generated by the permutations $\iota$ and $\tau$   is transitive and $\tau\iota$ is a product of  $n-k+2-2g$ independent cycles.

\section{The operation of deleting an edge}

Our main instrument is the {\em operation of deleting an edge} defined in~\cite{PR}.  Recall the definition and basic properties of this operation.

Let $\mathcal{M}=(X,G)$ be a connected marked map of genus $g$ with $v$ vertices, $n+1$ edges and $k$ faces, where $n>0$.
Denote by $A$ the set of arcs of this map, by  $e_i$ the arc marked with~$i$, by~$f_i$ the arc opposite to~$e_i$ and by~$\tilde e_i$ the edge of the graph~$G$ correspondent to these two arcs. The permutations $\sigma$, $\iota$ and~$\tau$ defined above act on the set~$A$.

Let's delete the edge  $\tilde e_1$ from the graph $G$. If there is an isolated vertex in the obtained graph we also delete this vertex.
Denote the obtained graph by~$G'$ and the set of its arcs by~$A'$ (i.e. $A'=A\setminus\{e_1,f_1\}$). 
Define on the set~$A'$   permutations $\iota'=\iota|_{A'}$ and $\sigma'(x)=\sigma^{s(x)}(x)$, where
$s(x)=\min\{l\in\N\mid\sigma^l(x)\in A'\}$. Then the permutation~$\iota'$ acts on the elements of~$A'$ as well as~$\iota$ and the permutation $\sigma'$ acts similarly to~$\sigma$, but  omits the deleted arcs $e_1$ and~$f_1$. Note that the number of omitted arcs $s(x)-1$ can be equal to 0, 1 or 2 (see figure~\ref{ris:sigma'}).

\noindent
\begin{figure}[!hb]
\centerline{\includegraphics{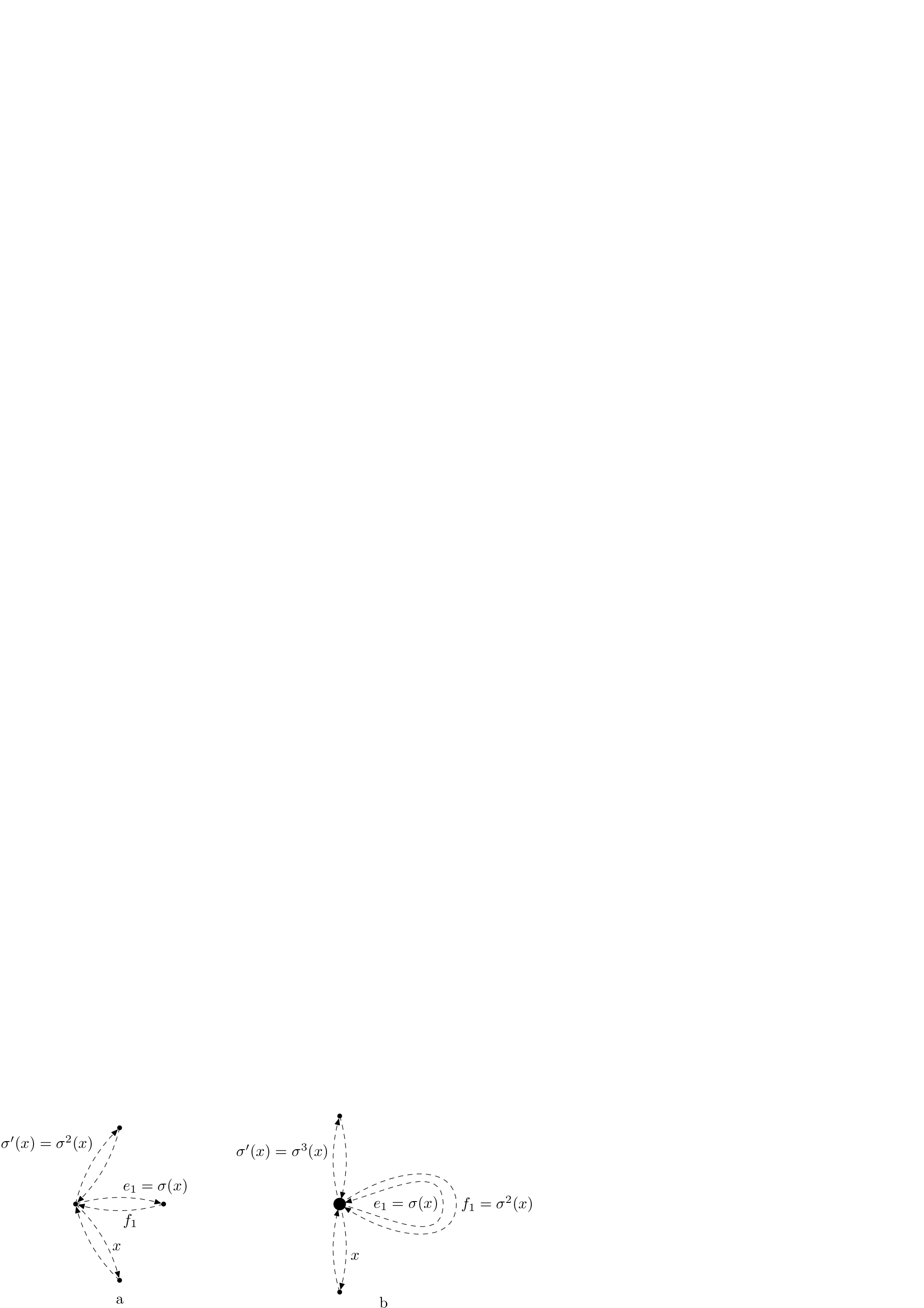}}
\caption{the case $s(x)=2$ is shown on figure~\ref{ris:sigma'}a and the case $s(x)=3$ is shown on figure~\ref{ris:sigma'}b.}
\label{ris:sigma'}
\end{figure} 

As it was noted in remark~\ref{rem:map_permut}, the ordered triple~$(A',\sigma',\iota')$ uniquely up to isomorphism defines a map~$\mathcal{M}'$, which has no isolated vertex such that~$A'$ is its set of arcs, cycles of permutation~$\iota'$ define pairs of arcs correspondent to edges and cycles of permutation~$\sigma'$ correspond to vertices of this map. Since cycles of~$\iota'$ also correspond to edges of the graph $G'$ and cycles of~$\sigma'$ correspond to vertices of~$G'$, we obtain that  $\mathcal{M}'=(X',G')$, where $X'$ is some surface. Note, that the surface~$X'$  can be disconnected.  

Consider the permutation $\tau'=\sigma'\iota'$ on the set~$A'$. As it was noted in the remark~\ref{rem:face}, its cycles correspond to faces of the map $\mathcal{M}'=(X',G')$.

\begin{lem}
\label{tau'}
Let $x\in A'$. Then
  $$
  \tau'(x)=
    \left\{
      \begin{array}{lcl}
        \tau(x)&,&\tau(x)\in A'\\
        \tau(f_1)&,&((\tau(x)=e_1\ \&\ \tau(f_1)\ne f_1)\vee(\tau(x)=f_1\ \&\ \tau(e_1)= e_1))\\
        \tau(e_1)&,&((\tau(x)=e_1\ \&\ \tau(f_1)= f_1)\vee(\tau(x)=f_1\ \&\ \tau(e_1)\ne e_1)).
      \end{array}
    \right.\,
  $$
\end{lem}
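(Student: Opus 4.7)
The plan is to unfold the definition $\tau'(x) = \sigma'(\iota'(x))$ and reduce the computation to iterating $\sigma$ until landing in $A'$, then dispatch by a short case analysis on where $\tau(x)$ sits. First observe that for $x \in A'$ the element $y := \iota(x)$ also lies in $A'$ (as $A'$ is the complement of a full $\iota$-cycle), and $\sigma(y) = \sigma\iota(x) = \tau(x)$. Thus $\tau'(x) = \sigma'(y)$ is the first entry of the sequence $\sigma(y), \sigma^2(y), \sigma^3(y), \ldots$ that lies in $A'$, and this first entry already equals $\tau(x)$.

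If $\tau(x) \notin \{e_1, f_1\}$, then $\sigma(y) \in A'$ and $\tau'(x) = \tau(x)$, yielding the first branch. Otherwise, combine the above with the identities
$$
\sigma(e_1) = \sigma\iota(f_1) = \tau(f_1), \qquad \sigma(f_1) = \sigma\iota(e_1) = \tau(e_1).
$$
Suppose $\tau(x) = e_1$. If $\tau(f_1) \neq f_1$, then $\sigma^2(y) = \sigma(e_1) = \tau(f_1) \in A'$ (it cannot equal $e_1$, since $\sigma$ is injective and $\sigma(y) = e_1$ already), so $\tau'(x) = \tau(f_1)$. If instead $\tau(f_1) = f_1$, then $\sigma^2(y) = f_1$, and one must advance to $\sigma^3(y) = \sigma(f_1) = \tau(e_1)$, giving $\tau'(x) = \tau(e_1)$. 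The case $\tau(x) = f_1$ is symmetric under $e_1 \leftrightarrow f_1$.

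The only subtle point is verifying that $\sigma^3(y) = \tau(e_1)$ really lies in $A'$ in the subcase $\tau(x) = e_1$, $\tau(f_1) = f_1$ (and, symmetrically, that $\tau(f_1) \in A'$ in the mirrored subcase). Since $\sigma(e_1) = f_1$ and $\sigma$ is a bijection, $\sigma(f_1) \neq f_1$, hence $\tau(e_1) \neq f_1$. If also $\tau(e_1) = e_1$, then $\sigma(f_1) = e_1$, so $(e_1\,f_1)$ would form a $2$-cycle of $\sigma$: the vertex carrying these two arcs would have no other arcs, forcing $\tilde e_1$ to be the unique edge of the connected graph $G$ and contradicting $n > 0$. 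This connectedness argument is the main hurdle; everything else is direct unwinding of the definitions.
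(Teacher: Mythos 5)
Your proof is correct and follows essentially the same route as the paper's: unwind $\tau'(x)=\sigma'(\iota'(x))$ with $y=\iota(x)$, use $\sigma(e_1)=\tau(f_1)$ and $\sigma(f_1)=\tau(e_1)$, and split on whether $\tau(f_1)=f_1$ (resp.\ $\tau(e_1)=e_1$). Your explicit verification that the resulting arc lies in $A'$ --- in particular that $\tau(e_1)=e_1$ and $\tau(f_1)=f_1$ cannot hold simultaneously by connectedness and $n>0$ --- is a detail the paper relegates to the remark following the lemma, so nothing is missing.
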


\begin{proof}
The case $\tau(x)\in A'$ is obvious. Consider the case $\tau(x)=e_1$ (the case $\tau(x)=f_1$ is similar). Let $y=\iota(x)$. Then $e_1=\sigma(y)$. Assume that $\tau(f_1)\ne f_1$. Then~$\sigma^2(y)=\sigma(e_1)=\sigma(\iota(f_1))=\tau(f_1)\in A'$ (see figure~\ref{ris:tau'}a), whence it follows that
  $$\tau'(x)=\sigma'(\iota'(x))=\sigma'(y)=\sigma^2(y)=\tau(f_1).$$
Now let $\tau(f_1)= f_1$. Then $\sigma^2(y)=\tau(f_1)=f_1\notin A'$ (see figure~\ref{ris:tau'}b), whence it follows that
  $$\tau'(x)=\sigma'(y)=\sigma^3(y)=\sigma(f_1)=\sigma(\iota(e_1))=\tau(e_1).$$
\end{proof}

\noindent
\begin{figure}[!hb]
\centerline{\includegraphics{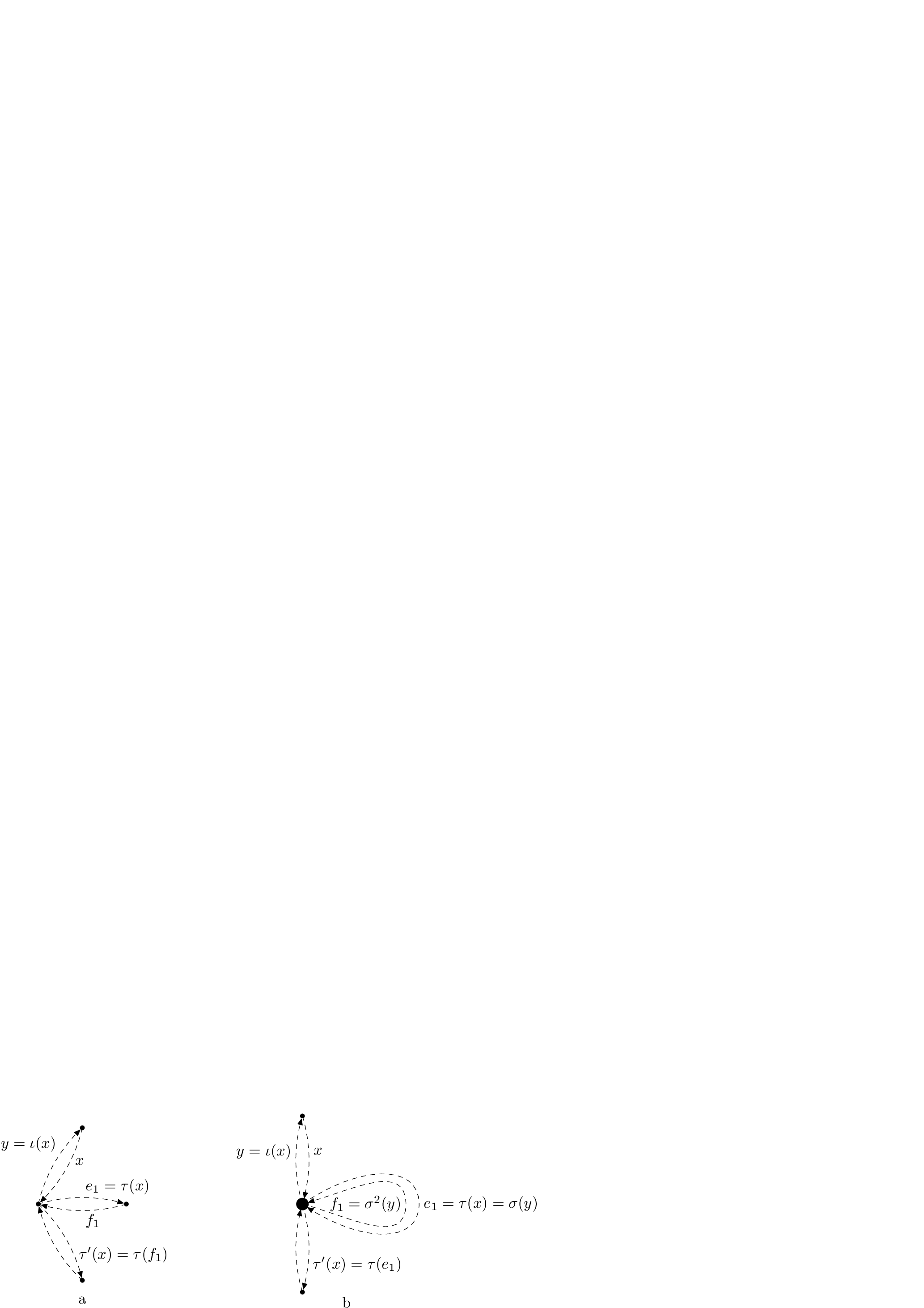}}
\caption{case $\tau(f_1)\ne f_1$ is shown on figure~\ref{ris:tau'}a, case $\tau(f_1)= f_1$ is shown on figure~\ref{ris:tau'}b.}
\label{ris:tau'}
\end{figure}

\begin{rem}
The equality $\tau(a)=a$ means that the arc~$a$ belongs to a face bounded by one edge and the edge~$\tilde{a}$ correspondent to the arc~$a$ is a loop. Note, that the equalities~$\tau(e_1)= e_1$ and $\tau(f_1)= f_1$ cannot hold together since the map~$(X,G)$ is connected and~$n>0$.
\end{rem}

Now consider cycles of the permutation~$\tau'$. Clearly, a cycle of~$\tau$  which doesn't contain~$e_1$ or~$f_1$, is a cycle of~$\tau'$. For a cycle of~$\tau$ which contains~$e_1$ or~$f_1$ the two following cases are possible: 
\begin{enumerate}
 \item $e_1$ and $f_1$ belong to distinct cycles $(e_1,a_1,\ldots,a_p)$ and $(f_1,b_1,\ldots,b_q)$;
 \item $e_1$ and $f_1$ belong to one cycle $(e_1,a_1,\ldots,a_p,f_1,b_1,\ldots,b_q)$.
\end{enumerate}
In  the first case the arcs~$e_1$ and~$f_1$ belong to different faces of the map~$(X,G)$ and in the second case~$e_1$ and~$f_1$ belong to one face of this map. In both cases one of the numbers~$p$ and~$q$ can be equal to zero: in the first case it means that one of the arcs~$e_1$ and~$f_1$ is the only arc of its face and in the second case it means that these arcs are consecutive.
Note also that in the first case the graph~$G'$ is connected. Consider these cases in details.  In each case  we count the number of faces and the genus of the map~$(X',G')$. We also put the marks on the edges  to obtain a marked map or an ordered pair of marked maps.
\medskip

\q1. \textit{$e_1$ and  $f_1$ belong to different cycles~$(e_1,a_1,\ldots,a_p)$ and $(f_1,b_1,\ldots,b_q)$.} 

It follows from lemma~\ref{tau'} that in the permutation~$\tau'$ these two cycles will be joined into one cycle 
$(a_1,\ldots,a_p,b_1,\ldots,b_q)$. Hence, in this case after deleting the edge~$\tilde{e}_1$ from the map~$(X,G)$ two faces that bound by this edge will be glued together into one face. Thus, we obtain a map with~$v$ vertices, $n$ edges and~${k-1}$ faces. By   Euler's formula its genus is equal to~$g$. To obtain a marked map we mark the new face (obtained after deleting~$\tilde e_1$) with~1, enumerate other faces with numbers from~2 to~$k-1$ in the order of increasing of their previous numbers and correct the marks on  edges. 

It remains to mark with~1 some arc of the map~$(X',G')$.  
Let the arc~$f_1$ belong to face~$j$ of the initial map. If~$e_j\ne f_1$, then we mark with~1 the arc~$e_j$. If~$e_j=f_1$ and $\tau(e_1)\ne e_1$, then we mark with~1 the arc~$\tau(e_1)$. Finally, if~$e_j=f_1$ and~$\tau(e_1)=e_1$, then we mark with~1 the arc~$\tau(f_1)$ (see figure~\ref{ris:q1}).  We obtain as a result a connected marked map of genus~$g$ with~$n$ edges and~$k-1$ faces. 

\noindent
\begin{figure}[!hb]
\centerline{\includegraphics[width=.96\columnwidth, keepaspectratio]{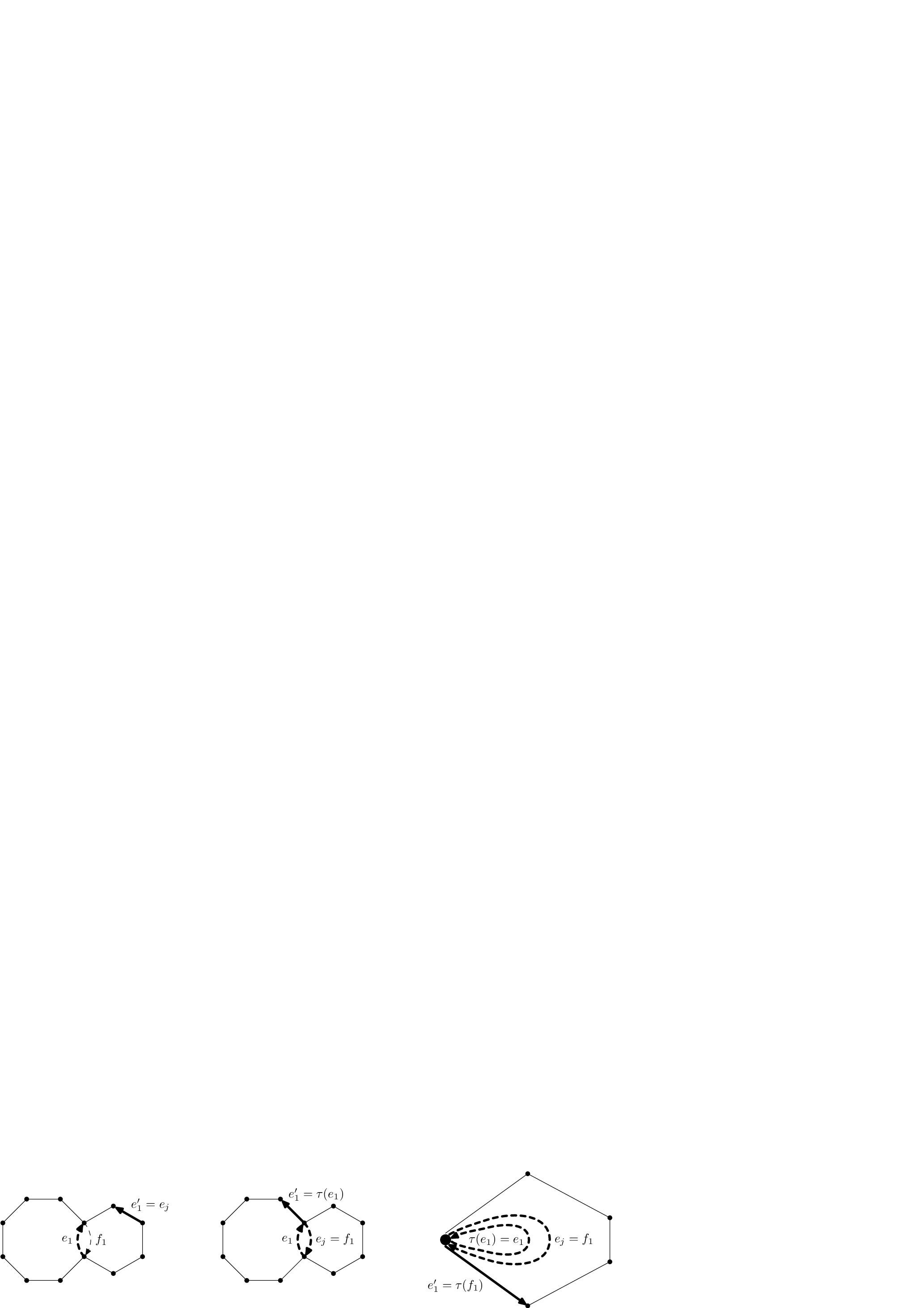}}
\caption{The operation of deleting an edge, case \textbf{1}. In each subcase the arc marked with~1 is called~$e_1'$.}
\label{ris:q1}
\end{figure} 

\q2. \textit{$e_1$ and $f_1$ belong to the cycle $(e_1,a_1,\ldots,a_p,f_1,b_1,\ldots,b_q)$.} 

Then by lemma~\ref{tau'} the permutation~$\tau'$ has the cycles $(a_1,\ldots,a_p)$ and~$(b_1,\ldots,b_q)$ (if $pq>0$  there is only one of these cycles). Hence, the map~$(X',G')$ contains $k$ faces, if~$pq=0$ and $k+1$ faces, if $pq>0$. We consider three subcases:
\begin{enumerate}
 \item[2.1.] $pq=0$ (i.e. the arcs~$e_1$ and~$f_1$ are consecutive);
 \item[2.2.] $pq>0$ and the graph~$G'$ is connected;
 \item[2.3.] $pq>0$ and the graph  $G'$ is disconnected.
\end{enumerate}
Consider these subcases in details.
\smallskip

\q{2.1}. \textit{$pq=0$.}
In this case the arcs~$e_1$ and~$f_1$ are consecutive, i.e. one of the following two equalities holds: $e_1=\tau(f_1)$ or $f_1=\tau(e_1)$. 
Then one end of the edge~$\tilde e_1$ has degree~1. After deleting the edge~$\tilde e_1$ this end becomes an isolated vertex, hence, it is also  deleted. Thus, $(X',G')$ is a connected map with~${v-1}$  vertices, $n$ edges and $k$ faces. By Euler's formula it has genus~$g$. We preserve numeration and marks on all arcs except the deleted arc~$e_1$ and mark with~1  that of the arcs~$\tau(e_1)$ and $\tau(f_1)$, which belongs to~$A'$ (see figure~\ref{ris:q21}). We obtain as a result a connected marked map of genus~$g$ with $v-1$ vertices, $n$ edges and~$k$ faces.

\noindent
\begin{figure}[!hb]
\centerline{\includegraphics{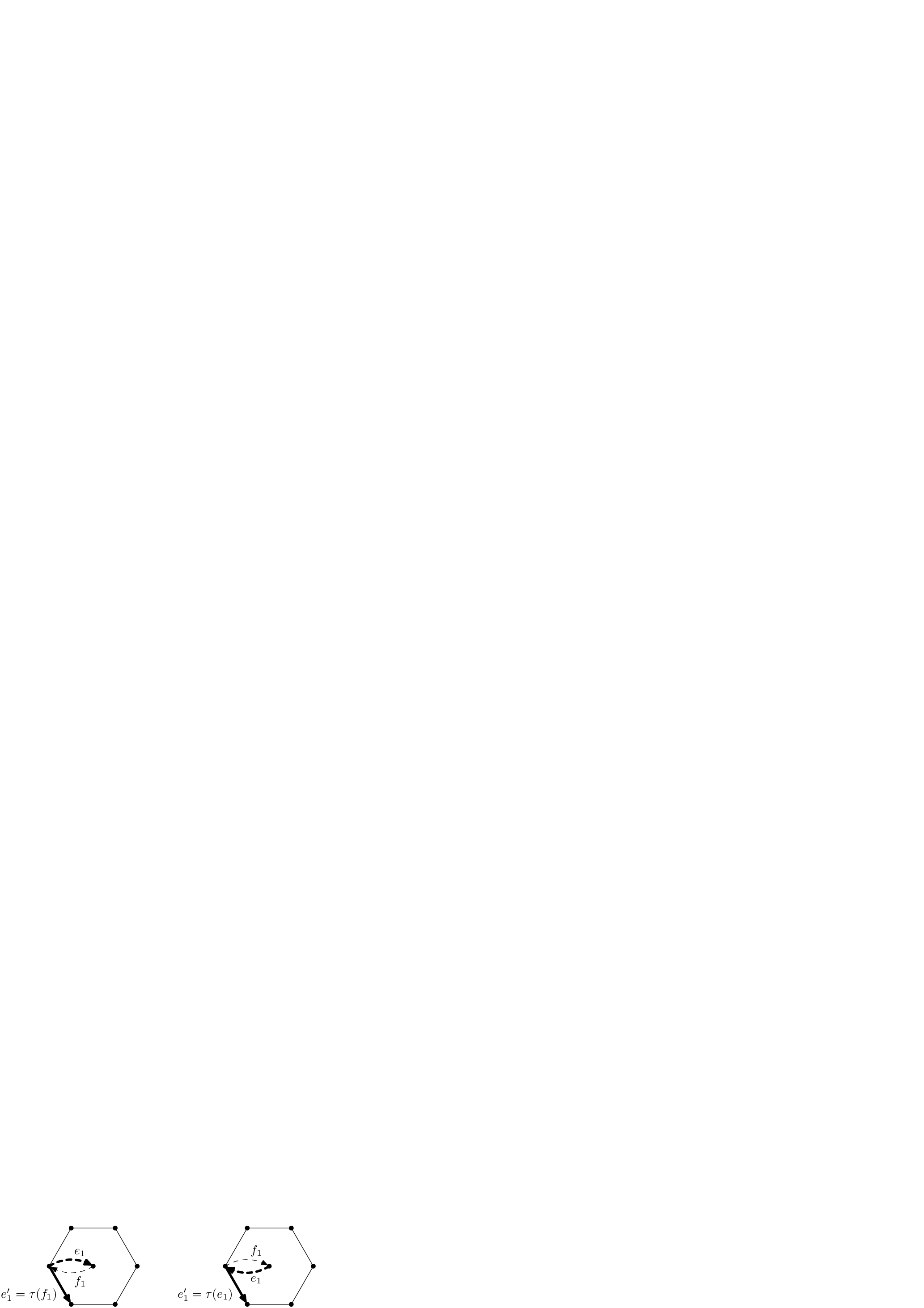}}
\caption{The operation of deleting an edge, case \textbf{2.1}. Different dispositions of the arcs~$e_1$ and~$f_1$.}
\label{ris:q21}
\end{figure}

\q{2.2}. \textit{$pq>0$ and the graph $G'$ is connected.} In this case the map $(X',G')$ is connected and contains $v$ vertices, $n$ edges and~${k+1}$ faces. By Euler's formula it has genus~${g-1}$. We enumerate with~1 the face correspondent to the cycle $(a_1,\ldots,a_p)$ of the permutation~$\tau'$ and with~${k+1}$ the face correspondent to the cycle~$(b_1,\ldots,b_q)$. We mark in this faces the arcs~$a_1$ and~$b_1$, respectively (see figure~\ref{ris:q223}a). All other marks we preserve. As a result, a connected marked map of genus~${g-1}$ with $n$ edges and~${k+1}$ faces is   obtained.

\noindent
\begin{figure}[!hb]
\centerline{\includegraphics[width=.96\columnwidth, keepaspectratio]{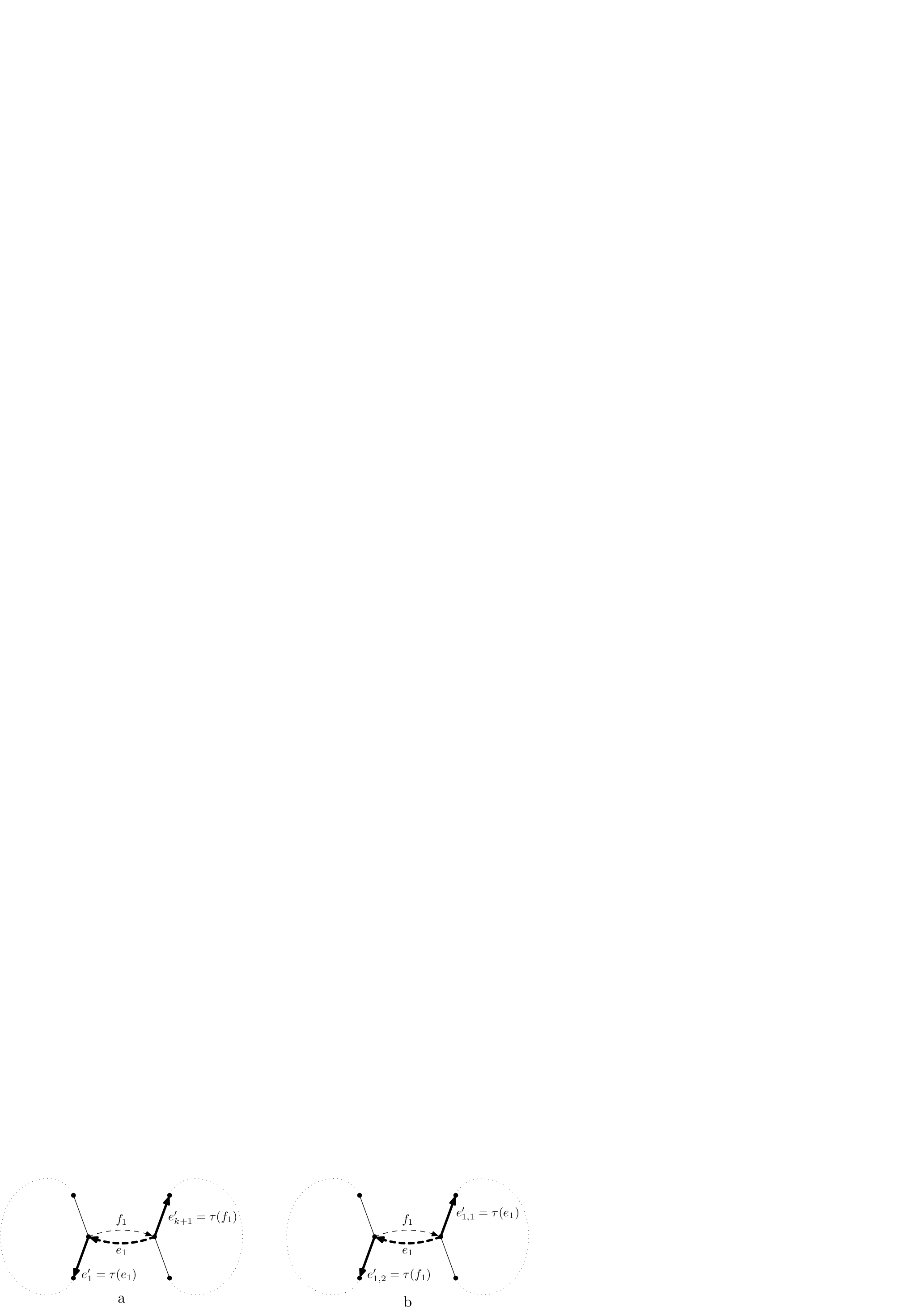}}
\caption{The operation of deleting an edge, cases \textbf{2.2} and \textbf{2.3}. In case \textbf{2.2} 
(figure~\ref{ris:q223}a) arcs, which will be marked  with~$1$ and~$k+1$, are denoted by~$e_1'$ and~$e_{k+1}'$, respectively. In case \textbf{2.3} (figure~\ref{ris:q223}b)  arcs, which will be marked with 1 in the maps $(X_1',G_1')$ and $(X_2',G_2')$, are denoted by $e_{1,1}'$ and $e_{1,2}'$, respectively.}
\label{ris:q223}
\end{figure}

\q{2.3}. \textit{$pq>0$ and the graph $G'$ is disconnected.} In this case the map $(X',G')$ consists of two connected components. Clearly,  ends of the edge~$\tilde e_1$ lie in distinct connected components. Therefore, the arcs~$a_1=\tau(e_1)$ and~$b_1=\tau(f_1)$ also lie  in distinct connected components.  Denote by~$(X_1',G_1')$ the component that contains~$a_1$ and by~$(X_2',G_2')$ the component that contains~$b_1$. We mark with~1 the arcs~$a_1$ and~$b_1$ and enumerate with~1 the faces which contain these arcs (see figure~\ref{ris:q223}b). Then we enumerate all other faces of each of components in the increasing order of their numbers in the initial map  and mark correspondent arcs with the new numbers of faces. We obtain as a result an ordered pair  of marked maps which contain together~$v$ vertices, $n$ edges and $k+1$ faces, each of maps has at least one edge. Let the first map has genus~$g_1$ and the second has genus~$g_2$.  By Euler's formula we have that $g_1+g_2=g$.

\bigskip
In~\cite{PR} for each of the cases  we have applied the operation of deleting an edge to all connected marked  maps of genus~$g$ with $n+1$ edges and $k$ faces and counted   the number of ways each resulting  map was obtained. The following lemma joins these results.

\begin{lem} {\rm \cite[lemmas 1-4]{PR}}
\label{lem:quantity}
Let's apply the  operation of deleting an edge to all connected marked  maps of genus~$g$ with $n+1$ edges and $k$ faces.  Then the following statements hold. 

$1)$ Each connected marked  map of genus~$g$ with $n$ edges and $k-1$ faces,  in which the face~$1$ has~$m$, is obtained $\frac{(m+1)(m+2)(k-1)}{2}$ times.

$2)$ Each connected marked  map  of genus~$g$ with $n$ edges and $k$ faces is obtained twice.

$3)$  Each connected marked  map  of genus~$g-1$ with $n$ edges and $k+1$ faces is obtained once.

$4)$  Each ordered pair of connected marked  maps with positive number of edges, which have together $n$ edges and $k+1$ faces and   have sum of the genera~$g$  is obtained  $C_{k-1}^{k_1-1}=C_{k-1}^{k_2-1}$ times,  where $k_1$ and~$k_2$ are the numbers of faces in the first and in the second maps, respectively.
\end{lem}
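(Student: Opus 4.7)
My plan is to prove each of the four parts by separately analyzing the four mutually exclusive outcomes (cases~1, 2.1, 2.2, 2.3) of the deleting-edge operation. For a fixed target (a marked map or an ordered pair), I determine all marked maps $\mathcal{M}$ whose image is this target by examining what data of $\mathcal{M}$ is free once the target and the marking rule are fixed.

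Parts~2, 3, and~4 are short. In case~2.2 (part~3), the forward rule prescribes that the marked arcs of the new faces $1$ and~$k+1$ are $a_1=\tau(e_1)$ and $b_1=\tau(f_1)$, so in the reverse direction the insertion positions of $e_1$ and $f_1$ are forced, giving a unique preimage. In case~2.3 (part~4), the positions are again forced, but the forward relabeling distributed the face labels $\{2,\dots,k\}$ of $\mathcal{M}$ between the two connected components of $G'$ preserving order; a preimage thus corresponds bijectively to a choice of subset of size $k_1-1$ among these labels going to the first component, giving $C_{k-1}^{k_1-1}$ preimages. In case~2.1 (part~2), the edge $\tilde e_1$ is a leaf whose attaching corner in $\mathcal{M}'$ is uniquely determined by the marked arc~$e_1'$, and the only remaining choice is between $\tau(e_1)=f_1$ (so $p=0$) and $\tau(f_1)=e_1$ (so $q=0$), producing exactly two preimages.

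Part~1 (case~1) is the technical core. I fix $j\in\{2,\dots,k\}$ (contributing the factor~$k-1$) and sum over $p\in\{0,1,\dots,m\}$, the size of the $a$-block. For each~$p$, the three subcases of the forward marking rule (namely $e_j\neq f_1$; $e_j=f_1$ with $p\ge1$; $e_j=f_1$ with $p=0$), together with the constraint that the forward operation must produce the given marked arc $e_1'$, determine exactly which positions of the $a$-block in the boundary cycle of face~$1$ of $\mathcal{M}'$ and which choices of~$e_j$ are admissible. A direct case analysis gives $m+1$ preimages when $p=0$, exactly~$1$ when $p=m$, and $m-p+1$ when $0<p<m$; summing,
\[
(m+1)+1+\sum_{p=1}^{m-1}(m-p+1)=\frac{(m+1)(m+2)}{2},
\]
and multiplying by~$k-1$ gives the claim. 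The main subtlety is the case $p=0$: the marking rule then admits two subcases both producing the same $e_1'$ for distinct preimages (namely $e_j=f_1$ forcing one particular insertion point, or $e_j=c_1$ allowing any of the $m$ insertion points of the loop), which explains the extra preimage that raises the count from~$m$ to~$m+1$.
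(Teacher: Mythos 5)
Your argument is correct. Note that the paper does not actually prove Lemma~\ref{lem:quantity} --- it is imported from~\cite{PR} (lemmas 1--4 there) --- so the only material to check your proposal against is the case analysis of the deletion operation in Section~2, and your preimage-counting argument is faithful to it. Parts 2--4 are as you say: in case \textbf{2.2} the marked arcs $a_1$ and $b_1$ of the new faces $1$ and $k+1$ force the insertion points of $e_1$ and $f_1$, giving one preimage; in case \textbf{2.3} the only remaining freedom is the order-preserving distribution of the old labels $\{2,\dots,k\}$ over the two components, giving $C_{k-1}^{k_1-1}$; in case \textbf{2.1} the pendant pair sits in the corner preceding $e_1'$ and only its orientation is free, giving $2$. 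For part~1 your per-$p$ counts check out: for each $p$ the subcase $e_j=f_1$ of the marking rule contributes exactly one preimage (with $a_1=e_1'$ if $p>0$, resp.\ $b_1=e_1'$ if $p=0$), while the subcase $e_j\ne f_1$ forces $e_j=e_1'$ and contributes one preimage for each position of the $b$-block that contains $e_1'$, i.e.\ $m-p$ of them (including $m$ when $p=0$, where the block is everything but the location of $b_1$ is still free); summing $m-p+1$ over $p=0,\dots,m$ gives $\tfrac{(m+1)(m+2)}{2}$, matching your values $m+1$, $m-p+1$ and $1$, and the factor $k-1$ comes from the choice of $j$. Your isolation of the $p=0$ loop case, where both branches of the marking rule can return the same arc $e_1'$ for distinct preimages, is exactly the delicate point a naive count would miss.
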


\begin{rem}
\label{rem:trivmap}
The cases \textbf{2.1} and \textbf{2.3} are quite similar. In fact, instead of a connected marked map of genus~$g$ with~$n$ edges and~$k$ faces we may consider in case~\textbf{2.1} an ordered pair of this map and a map with one vertex and zero edges i.e a sphere with one marked vertex. Such a pair appears if   instead of deleting an isolated vertex obtained after deleting the edge~$\tilde e_1$ we consider it as a trivial map.
This trivial map is the first in our ordered pair if the arc~$e_1$ is  oriented to its only vertex and the second if~$e_1$ is oriented from this vertex. In this case each of these two ordered pairs is obtained once. That corresponds to the formula of item~4 of lemma~\ref{lem:quantity}. Thus we can get rid of case~\textbf{2.1} and delete from case \textbf{2.3} the condition that each map of the pair contains at least one edge. 
\end{rem}

\begin{thm}
\label{req}
For any integers $n>0$, $k>0$ and $g\ge0$ the following equality holds: 
\begin{multline}
\label{req:all}
 \varepsilon_g(n+1,k)=\\
 =\sum_{m_1+\ldots+m_{k-1}=2n}\tfrac{(m_1+1)(m_1+2)(k-1)}{2}\varepsilon_g(m_1,\ldots,m_{k-1};k-1)+\\
 +\sum_{\ell=1}^k\sum_{h=0}^g\sum_{i=0}^nC_{k-1}^{\ell-1}\varepsilon_{h}(i,\ell)\varepsilon_{g-h}(n-i,k-\ell+1)+\\
 +\varepsilon_{g-1}(n,k+1).
\end{multline}
In the cases where~$k-1=0$ or $g-1=-1$, the correspondent summands are equal to zero.
\end{thm}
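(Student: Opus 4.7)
The plan is to apply the operation of deleting an edge to every one of the $\varepsilon_g(n+1,k)$ connected marked maps of genus $g$ with $n+1$ edges and $k$ faces, and classify the outputs according to which of the cases \textbf{1}, \textbf{2.1}, \textbf{2.2}, \textbf{2.3} occurred. Each initial map contributes exactly one output (a marked map, or in case \textbf{2.3} an ordered pair of marked maps), so the identity will follow from summing, over all possible outputs, the multiplicity with which that output is produced — and these multiplicities are precisely what Lemma~\ref{lem:quantity} records.

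Next I would match each case with the appropriate summand on the right-hand side. Case \textbf{1} produces connected marked maps of genus $g$ with $n$ edges and $k-1$ faces; grouping these outputs by the face-degree sequence $(m_1,\ldots,m_{k-1})$ and invoking item~1 of Lemma~\ref{lem:quantity} yields the first sum. Case \textbf{2.2} produces each connected marked map of genus $g-1$ with $n$ edges and $k+1$ faces exactly once (item~3 of Lemma~\ref{lem:quantity}), contributing $\varepsilon_{g-1}(n,k+1)$.

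The main obstacle — and the step requiring care — is combining cases \textbf{2.1} and \textbf{2.3} into the double pair-sum. Here I would rely on Remark~\ref{rem:trivmap}: reinterpret case \textbf{2.1} as producing an ordered pair in which one component is the trivial map (the sphere with one marked vertex, counted by $\varepsilon_0(0,1)=1$), placed first or second according to the orientation of the deleted arc. With that convention, the class of outputs in cases \textbf{2.1} and \textbf{2.3} together is exactly the class of ordered pairs of connected marked maps whose total data is $n$ edges, $k+1$ faces, and summed genus $g$, with the first component having genus $h$, $i$ edges, and $\ell$ faces (where $\ell$ ranges from $1$ to $k$). The multiplicity from item~4 of Lemma~\ref{lem:quantity} is $C_{k-1}^{\ell-1}$, and one should verify that this gives the correct count of $2$ for each original map in the \textbf{2.1} situation: indeed $C_{k-1}^{0}+C_{k-1}^{k-1}=2$, matching item~2 of Lemma~\ref{lem:quantity}. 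Summing over all $(\ell,h,i)$ produces the middle sum.

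Finally, I would dispose of the degenerate cases: when $k=1$ the first sum is empty (there are no maps with $k-1=0$ faces) in agreement with the theorem's convention, and when $g=0$ the last term $\varepsilon_{-1}(n,k+1)$ is zero by convention. The bulk of the work is in the paragraph above; once the bijective content of Remark~\ref{rem:trivmap} is spelled out, the rest of the proof is a bookkeeping exercise on top of Lemma~\ref{lem:quantity}.
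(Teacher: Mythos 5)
Your proposal is correct and follows essentially the same route as the paper: apply the edge-deletion operation to all connected marked maps of genus $g$ with $n+1$ edges and $k$ faces, use Remark~\ref{rem:trivmap} to fold case \textbf{2.1} into the ordered-pair count of case \textbf{2.3}, and read off the multiplicities from Lemma~\ref{lem:quantity}. Your explicit check that $C_{k-1}^{0}+C_{k-1}^{k-1}=2$ reconciles item~4 with item~2 is exactly the consistency the paper's (much terser) proof relies on.
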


\begin{proof}
Apply the operation of deletion an edge to all connected marked  maps of genus~$g$ with $n+1$ edges and $k$ faces. 
In the cases where one map of genus~$g$ with $n$ edges and~$k$ faces is obtained we substitute it by an ordered pair of connected marked maps as it was written in remark~\ref{rem:trivmap}. Counting the number of ways each object (a map or an ordered pair of maps) is obtained as a result of this operation with the help of lemma~\ref{lem:quantity} and remark~\ref{rem:trivmap}, we get the desired equality.
\end{proof}

\begin{cor}
For $n>0$ the following equalities hold:
\begin{equation}
\label{req:eps2}
 \varepsilon_g(n,2)=\varepsilon_{g+1}(n+1)-\sum_{h=0}^{g+1}\sum_{i=0}^{n}\varepsilon_h(i)\varepsilon_{g+1-h}(n-i);
\end{equation}

\begin{multline}
\label{req:eps3}
 \varepsilon_g(n,3)=\varepsilon_{g+1}(n+1,2)-2\sum_{h=0}^{g+1}\sum_{i=0}^{n}\varepsilon_h(i)\varepsilon_{g+1-h}(n-i,2)-\\-(n+1)(2n+1)\varepsilon_{g+1}(n).
\end{multline}
\end{cor}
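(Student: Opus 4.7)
The plan is to derive both identities as direct specializations of Theorem~\ref{req} (equation~\eqref{req:all}) with $k$ small and with $g$ replaced by $g+1$. No new combinatorial argument is needed; everything reduces to careful bookkeeping of the three sums in~\eqref{req:all}.

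For \eqref{req:eps2}, I would set $k=1$ and replace $g$ by $g+1$ in~\eqref{req:all}. The first summand vanishes because $k-1=0$, as noted in the statement of the theorem. In the triple sum $\ell$ must equal $1$, so $C_{k-1}^{\ell-1}=C_0^0=1$, and the factors collapse to $\varepsilon_h(i,1)\varepsilon_{g+1-h}(n-i,1)=\varepsilon_h(i)\varepsilon_{g+1-h}(n-i)$. The last summand is $\varepsilon_g(n,2)$. Moving it to the left-hand side yields exactly~\eqref{req:eps2}.

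For \eqref{req:eps3}, I would set $k=2$ and again replace $g$ by $g+1$. Now the first summand has a single term ($m_1=2n$) and contributes $\tfrac{(2n+1)(2n+2)}{2}\varepsilon_{g+1}(n)=(n+1)(2n+1)\varepsilon_{g+1}(n)$, which matches the subtracted term in~\eqref{req:eps3}. The triple sum has $\ell\in\{1,2\}$ with $C_1^{\ell-1}=1$ in both cases. The $\ell=1$ contribution is $\sum_{h,i}\varepsilon_h(i)\varepsilon_{g+1-h}(n-i,2)$ and the $\ell=2$ contribution is $\sum_{h,i}\varepsilon_h(i,2)\varepsilon_{g+1-h}(n-i)$. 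Under the substitution $h\mapsto g+1-h$, $i\mapsto n-i$ the second sum coincides with the first, producing the factor $2$ in front. The final summand $\varepsilon_g(n,3)$ is moved to the left-hand side, giving~\eqref{req:eps3}.

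There is no serious obstacle; the only care needed is to recognize that the convolution sum at $\ell=2$ equals the one at $\ell=1$ after reindexing, and to correctly identify $\varepsilon_{g+1}(m_1;1)$ with $\varepsilon_{g+1}(m_1/2)$ when $m_1=2n$ in the first summand of~\eqref{req:all}.
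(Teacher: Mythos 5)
Your proposal is correct and is exactly the paper's argument: the paper likewise obtains both identities by substituting $g\mapsto g+1$ into~\eqref{req:all} and setting $k=1$ and $k=2$, respectively. You merely spell out the bookkeeping (the vanishing first sum for $k=1$, the identification $\varepsilon_{g+1}(2n;1)=\varepsilon_{g+1}(n)$, and the symmetry of the $\ell=1$ and $\ell=2$ convolutions giving the factor $2$) that the paper leaves implicit.
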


\begin{proof}
Formulas~\eqref{req:eps2} and~\eqref{req:eps3} follow immediately from theorem~\ref{req}: substitute~$g$ by $g+1$ in formula~\eqref{req:all} and put $k=1$ and $k=2$, respectively).
\end{proof}

\section{Gluings from two polygons}

In this section with the help of the operation of deleting an edge we give an elementary proof of the formula~\eqref{eq:C2}.

\begin{thm}[{see~\cite[theorem 4.2]{APRW2}}]
\label{th:C2}
For all $g\ge0$
\begin{equation*}
 \mathbf{C}_g^{[2]}(z)=\frac{P_g^{[2]}(z)}{(1-4z)^{3g+2}},
\end{equation*}
where $P_g^{[2]}(z)$ is a polynomial with integer coefficients, which satisfies the equation
\begin{equation}
 P_g^{[2]}(z)=z^{-1}P_{g+1}(z)-\sum_{h=1}^g P_h(z)P_{g+1-h}(z).
\end{equation}
\end{thm}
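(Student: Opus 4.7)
The plan is to derive the formula essentially by plugging the known formula \eqref{eq:C1} for $\mathbf{C}_h(z)$ into the consequence \eqref{req:eps2} of Theorem~\ref{req}, after converting the latter to a generating function identity.

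First I would multiply \eqref{req:eps2} by $z^n$ and sum over $n\ge 1$. Since $\varepsilon_g(0,2)=0$ for all $g\ge0$ (as $k+g>1$), the left-hand side is exactly $\mathbf{C}_g^{[2]}(z)$. On the right, the shift on $\varepsilon_{g+1}(n+1)$ yields $z^{-1}\mathbf{C}_{g+1}(z)$; the boundary terms $\varepsilon_{g+1}(0)z^{-1}$ and $\varepsilon_{g+1}(1)$ vanish (both are $0$ for $g\ge 0$, by Remark~\ref{rem:zero}). The double sum is a Cauchy product, and after observing that $\varepsilon_h(0)\varepsilon_{g+1-h}(0)=0$ for every $h$ when $g\ge0$, it collapses to $\sum_{h=0}^{g+1}\mathbf{C}_h(z)\mathbf{C}_{g+1-h}(z)$. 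So I obtain
\begin{equation*}
\mathbf{C}_g^{[2]}(z)=z^{-1}\mathbf{C}_{g+1}(z)-\sum_{h=0}^{g+1}\mathbf{C}_h(z)\mathbf{C}_{g+1-h}(z).
\end{equation*}

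Next I would isolate the $h=0$ and $h=g+1$ contributions, which together equal $2\mathbf{C}_0(z)\mathbf{C}_{g+1}(z)$, and use the explicit Catalan generating function \eqref{eq:catalan}. The key algebraic step is
\begin{equation*}
z^{-1}-2\mathbf{C}_0(z)=\frac{1}{z}-\frac{1-\sqrt{1-4z}}{z}=\frac{\sqrt{1-4z}}{z},
\end{equation*}
so that
\begin{equation*}
\mathbf{C}_g^{[2]}(z)=\frac{\sqrt{1-4z}}{z}\,\mathbf{C}_{g+1}(z)-\sum_{h=1}^{g}\mathbf{C}_h(z)\mathbf{C}_{g+1-h}(z).
\end{equation*}

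Then I would substitute \eqref{eq:C1}: each term $\mathbf{C}_h(z)\mathbf{C}_{g+1-h}(z)$ has denominator $(1-4z)^{(3h-1/2)+(3(g+1-h)-1/2)}=(1-4z)^{3g+2}$, and the factor $\sqrt{1-4z}$ precisely cancels the half-integer exponent of $\mathbf{C}_{g+1}(z)$, again leaving denominator $(1-4z)^{3g+2}$. Factoring it out yields
\begin{equation*}
\mathbf{C}_g^{[2]}(z)=\frac{z^{-1}P_{g+1}(z)-\sum_{h=1}^gP_h(z)P_{g+1-h}(z)}{(1-4z)^{3g+2}},
\end{equation*}
so the natural definition is $P_g^{[2]}(z)=z^{-1}P_{g+1}(z)-\sum_{h=1}^gP_h(z)P_{g+1-h}(z)$, which is the required functional equation.

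Finally I would verify that this $P_g^{[2]}(z)$ is a polynomial with integer coefficients. The sum is manifestly a polynomial with integer coefficients by \eqref{eq:C1}. For the first term I use the divisibility $z^{2(g+1)}\mid P_{g+1}(z)$ from \eqref{eq:C1}, which makes $z^{-1}P_{g+1}(z)$ a polynomial (in fact divisible by $z^{2g+1}$) with integer coefficients. For $g=0$ the case needs a brief separate check: $P_0^{[2]}(z)=z^{-1}P_1(z)$ is a genuine polynomial because $P_1(z)$ is divisible by $z^2$. I expect the only mildly delicate point to be the bookkeeping of the boundary terms $\varepsilon_{g+1}(0)$, $\varepsilon_{g+1}(1)$ and $\varepsilon_h(0)\varepsilon_{g+1-h}(0)$ when passing from the recurrence to generating functions; everything else is direct algebra using the already-established form of $\mathbf{C}_h(z)$.
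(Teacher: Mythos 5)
Your proposal is correct and follows essentially the same route as the paper: convert the recurrence \eqref{req:eps2} into the generating-function identity $\mathbf{C}_g^{[2]}(z)=z^{-1}\mathbf{C}_{g+1}(z)-\sum_{h=0}^{g+1}\mathbf{C}_h(z)\mathbf{C}_{g+1-h}(z)$, split off the $h=0$ and $h=g+1$ terms, substitute \eqref{eq:C1} and \eqref{eq:catalan}, and use $z^{2(g+1)}\mid P_{g+1}(z)$ for integrality. The only cosmetic difference is that you simplify $z^{-1}-2\mathbf{C}_0(z)=\sqrt{1-4z}/z$ before substituting, whereas the paper substitutes everything first and then cancels; your bookkeeping of the boundary terms matches the paper's appeal to Remark~\ref{rem:zero}.
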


\begin{proof}
Applying formulas~\eqref{eq:C1}, \eqref{eq:catalan}, \eqref{req:eps2} and taking into account that $\varepsilon_{g+1}(0)=0$ 
by remark~\ref{rem:zero}, we obtain
\begin{align*}
 \mathbf{C}_g^{[2]}(z)&=\sum_{n\ge0}\varepsilon_g(n,2)z^n=\\
&=\sum_{n\ge0}\biggl(\varepsilon_{g+1}(n+1)z^n-\sum_{h=0}^{g+1}\sum_{i=0}^{n}\varepsilon_h(i)z^i\cdot\varepsilon_{g+1-h}(n-i)z^{n-i}\biggr)=\\
&=z^{-1}\mathbf{C}_{g+1}(z)-\sum_{h=0}^{g+1}\mathbf{C}_{h}(z)\mathbf{C}_{g+1-h}(z)=\\
&=z^{-1}\mathbf{C}_{g+1}(z)-\sum_{h=1}^{g}\mathbf{C}_{h}(z)\mathbf{C}_{g+1-h}(z)-2\mathbf{C}_{0}(z)\mathbf{C}_{g+1}(z)=\\
&=\frac{P_{g+1}(z)}{z(1-4z)^{3g+\frac{5}{2}}}-\sum_{h=1}^{g}\frac{P_h(z)P_{g+1-h}(z)}{(1-4z)^{3g+2}}-2\frac{(1-\sqrt{1-4z})P_{g+1}(z)}{2z(1-4z)^{3g+\frac{5}{2}}}=\\
&=\biggl(z^{-1}P_{g+1}(z)-\sum_{h=1}^g P_h(z)P_{g+1-h}(z)\biggr)\frac{1}{(1-4z)^{3g+2}}.
\end{align*}

It is proved in~\cite{APRW1} that for all  $h\ge1$ the polynomial $P_h(z)$ has integer coefficients and is divisible by~$z^{2h}$. Hence,~$P_g^{[2]}(z)=z^{-1}P_{g+1}(z)-\sum_{h=1}^g P_h(z)P_{g+1-h}(z)$ is also a polynomial with integer coefficients.
\end{proof}

\begin{rem}
\label{rem:p1p2}
The polynomials~$P_g(z)$ were calculated  for $g=1,\ldots,5$ in~\cite{APRW1}. We write down these polynomials.
\begin{align*}
P_1(z) &= z^2,\\
P_2(z) &= 21z^4\, \left( z+1 \right)\\
P_3(z) &=  11z^6\, \left( 158\,{z}^{2}+558\,z+135 \right),\\
P_4(z) &=143z^8\left( 2339\,{z}^{3}+18378\,{z}^{2}+13689\,z+1575 \right),\\
P_5(z) &=  88179z^{10}\, \left( 1354\,{z}^{4}+18908\,{z}^{3}+28764\,{z}^{2}+9660\,z+675 \right).
\end{align*}
The polynomials $P_g^{[2]}(z)$ were calculated  for $g=0,1,\ldots,5$ in the paper~\cite{APRW2}. We also write down them.
\begin{align*}
P_0^{[2]}(z)&=z,\\
P_1^{[2]}(z)&=z^3\, (20z+21),\\
P_2^{[2]}(z)&=z^5\, \left(1696z^2+6096z+1485 \right),\\
P_3^{[2]}(z)&=z^7\, \left(330560z^3+2614896z^2+1954116z+225225 \right),\\
P_4^{[2]}(z)&=z^9\, \left(118652416z^4 +1661701632z^3+\right.\\
&\qquad\qquad\qquad\left.+2532145536z^2+851296320z+59520825\right),\\
P_5^{[2]}(z)&=z^{11}\, \left(68602726400z^5+
1495077259776z^4+3850801696512z^3+\right.\\
&\qquad\qquad\qquad\left.+2561320295136z^2 +505213089300z+24325703325\right).
\end{align*}
\end{rem}

\section{Gluings from three polygons}

In this section we prove a formula similar to~\eqref{eq:C2}  for the generating function~$\mathbf{C}_g^{[3]}(z)$.

\begin{thm}
\label{th:C3}
For all $g\ge0$
\begin{equation}
\label{C3}
 \mathbf{C}_g^{[3]}(z)=\frac{P_g^{[3]}(z)}{(1-4z)^{3g+4,5}},
\end{equation}
where $P_g^{[3]}(z)$ is a polynomial with integer coefficients, satisfying the equality
\begin{multline}
\label{eq:pg}
 P_g^{[3]}(z)=z^{-1}P_{g+1}^{[2]}(z)-2\sum_{h=1}^{g} P_h(z)P_{g+1-h}^{[2]}(z)-\\
 -2z^2(1-4z)^2P_{g+1}''(z)-((48g+20)z+5)z(1-4z)P_{g+1}'(z)-\\
 -(48(2g+1)(3g+2)z^2+(60g+44)z+1)P_{g+1}(z).
\end{multline}
In addition, the polynomial~$P_g^{[3]}(z)$ has degree at most $3g+3$ and is divisible by $z^{2g+2}$.
\end{thm}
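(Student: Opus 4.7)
The plan is to mirror the proof of Theorem~\ref{th:C2}, replacing formula~\eqref{req:eps2} by~\eqref{req:eps3}. Multiplying~\eqref{req:eps3} by $z^n$ and summing over $n\ge 0$ (the $n=0$ case is trivial since $\varepsilon_g(0,3)=\varepsilon_{g+1}(0,2)=\varepsilon_{g+1}(0)=0$ for $g\ge 0$) converts the recurrence into the generating-function identity
$$\mathbf{C}_g^{[3]}(z) = z^{-1}\mathbf{C}_{g+1}^{[2]}(z) - 2\sum_{h=0}^{g+1}\mathbf{C}_h(z)\mathbf{C}_{g+1-h}^{[2]}(z) - 2z^2\mathbf{C}_{g+1}''(z) - 5z\mathbf{C}_{g+1}'(z) - \mathbf{C}_{g+1}(z),$$
where the last three terms arise from the rearrangement $(n+1)(2n+1)=2n(n-1)+5n+1$ together with the standard identities $\sum_n n(n-1)\varepsilon_{g+1}(n)z^n = z^2\mathbf{C}_{g+1}''(z)$ and $\sum_n n\varepsilon_{g+1}(n)z^n = z\mathbf{C}_{g+1}'(z)$.

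To clear the square root hidden in $\mathbf{C}_0(z)$, I would separate the $h=0$ and $h=g+1$ contributions from the convolution. Since $1-2z\mathbf{C}_0(z)=\sqrt{1-4z}$ by~\eqref{eq:catalan}, the combination $z^{-1}\mathbf{C}_{g+1}^{[2]}(z)-2\mathbf{C}_0(z)\mathbf{C}_{g+1}^{[2]}(z)$ collapses to $P_{g+1}^{[2]}(z)/(z(1-4z)^{3g+9/2})$; since $\mathbf{C}_0^{[2]}(z)=z/(1-4z)^2$, the term $-2\mathbf{C}_{g+1}(z)\mathbf{C}_0^{[2]}(z)$ becomes $-2zP_{g+1}(z)/(1-4z)^{3g+9/2}$. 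All remaining pieces share the denominator $(1-4z)^{3g+9/2}$, so multiplying the whole identity by this factor turns it into the polynomial identity defining $P_g^{[3]}(z)$.

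For the derivative contributions, writing $\mathbf{C}_{g+1}(z)=P_{g+1}(z)(1-4z)^{-3g-5/2}$ and applying the Leibniz rule gives, after multiplication by $(1-4z)^{3g+9/2}$,
\begin{align*}
(1-4z)^{3g+9/2}\mathbf{C}_{g+1}(z) &= (1-4z)^2 P_{g+1}(z),\\
(1-4z)^{3g+9/2}\mathbf{C}_{g+1}'(z) &= (1-4z)^2 P_{g+1}'(z) + (12g+10)(1-4z)P_{g+1}(z),\\
(1-4z)^{3g+9/2}\mathbf{C}_{g+1}''(z) &= (1-4z)^2 P_{g+1}''(z) + 2(12g+10)(1-4z)P_{g+1}'(z) \\
&\quad+ (12g+10)(12g+14)P_{g+1}(z).
\end{align*}
Collecting the coefficients of $P_{g+1}''(z)$, $P_{g+1}'(z)$ and $P_{g+1}(z)$ --- the last receiving the additional summand $-2zP_{g+1}(z)$ inherited from $-2\mathbf{C}_{g+1}\mathbf{C}_0^{[2]}$ --- and simplifying produces exactly the coefficients $-2z^2(1-4z)^2$, $-((48g+20)z+5)z(1-4z)$ and $-(48(2g+1)(3g+2)z^2+(60g+44)z+1)$ asserted in~\eqref{eq:pg}.

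Integer coefficients follow immediately from the integrality of $P_h$ and $P_h^{[2]}$ and the fact that $P_{g+1}^{[2]}$ is divisible by $z$ (indeed by $z^{2g+3}$). The divisibility $z^{2g+2}\mid P_g^{[3]}(z)$ is a term-by-term check using $z^{2h}\mid P_h$ and $z^{2h+1}\mid P_h^{[2]}$. The main obstacle I anticipate is the degree bound $\deg P_g^{[3]}\le 3g+3$: when $\deg P_{g+1}=3g+2$ with leading coefficient $a$, each of the three derivative-related summands contributes to the $z^{3g+4}$ coefficient, and these must cancel. Extracting this coefficient gives $a\bigl[-32d(d-1)+(192g+80)d-48(2g+1)(3g+2)\bigr]$ at $d=3g+2$, which factors as $a(3g+2)\bigl[-32(3g+1)+(192g+80)-48(2g+1)\bigr]$, and the bracket equals $(-96g-32)+(192g+80)+(-96g-48)=0$.
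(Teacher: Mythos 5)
Your proposal is correct and follows essentially the same route as the paper: convert the recurrence~\eqref{req:eps3} into a generating-function identity, handle the $\sqrt{1-4z}$ via the $h=0$ term of the convolution, express $\sum_n(n+1)(2n+1)\varepsilon_{g+1}(n)z^n$ as $\mathbf{C}_{g+1}+5z\mathbf{C}_{g+1}'+2z^2\mathbf{C}_{g+1}''$ (the paper writes $(n+1)(2n+1)=2(n+1)(n+2)-3(n+1)$ where you write $2n(n-1)+5n+1$, yielding the same operator), and verify the vanishing of the $z^{3g+4}$ coefficient exactly as the paper does. All the coefficient computations you give check out against~\eqref{eq:pg}.
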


\begin{proof}
By formula~\eqref{req:eps3} we have
\begin{align}
\label{eq:cg31}
 \mathbf{C}_g^{[3]}(z)&=\sum_{n\ge0}\varepsilon_g(n,3)z^n=\notag\\
& =\sum_{n\ge0}\varepsilon_{g+1}(n+1,2)z^n-
  2\sum_{n\ge0}\sum_{h=0}^{g+1}\sum_{i=0}^{n}\varepsilon_h(i)z^i\cdot\varepsilon_{g+1-h}(n-i,2)z^{n-i}-\notag\\
&\phantom{=\ } -\sum_{n\ge0}(n+1)(2n+1)\varepsilon_{g+1}(n)z^n.
\end{align}
Set the notation
\begin{equation*}
 F_g(z)=\sum_{n\ge0}(n+1)(2n+1)\varepsilon_g(n)z^n.
\end{equation*}
Substituting $F_g(z)$ in the formula~\eqref{eq:cg31}, taking into account that $P_0^{[2]}(z)=z$ and applying the formulas~\eqref{eq:C1}, \eqref{eq:catalan} and \eqref{eq:C2}, we obtain
\begin{align}
\label{eq:cg32}
 \mathbf{C}_g^{[3]}(z)&=z^{-1}\mathbf{C}_{g+1}^{[2]}(z)-2\sum_{h=1}^{g+1}\mathbf{C}_{h}(z)\mathbf{C}_{g+1-h}^{[2]}(z)-2\mathbf{C}_{0}(z)\mathbf{C}_{g+1}^{[2]}(z)-F_{g+1}(z)=\notag\\
& =\biggl(z^{-1}P_{g+1}^{[2]}(z)-2\sum_{h=1}^{g} P_h(z)P_{g+1-h}^{[2]}(z)-2zP_{g+1}(z)\biggr)\frac{1}{(1-4z)^{3g+4,5}}-\notag\\
&\phantom{=\ }\qquad -F_{g+1}(z).
\end{align}

It remains to calculate~$F_{g+1}(z)$. Applying formula~\eqref{eq:C1}, we have

\begin{align}
\label{eq:fg2}
 F_{g+1}(z)&=\sum_{n\ge0}(n+1)(2n+1)\varepsilon_{g+1}(n)z^n=\notag\\
&= 2\sum_{n\ge0}(n+1)(n+2)\varepsilon_{g+1}(n)z^n-3\sum_{n\ge0}(n+1)\varepsilon_{g+1}(n)z^n=\notag\\
&= 2\left(z^2\mathbf{C}_{g+1}(z)\right)''-3\left(z\mathbf{C}_{g+1}(z)\right)'=\notag\\
&= \mathbf{C}_{g+1}(z)+5z\mathbf{C}_{g+1}'(z)+2z^2\mathbf{C}_{g+1}''(z)=\notag\\
&= \frac{2z^2P_{g+1}''(z)}{(1-4z)^{3g+2,5}}+\frac{((48g+20)z+5)zP_{g+1}'(z)}{(1-4z)^{3g+3,5}}+\notag\\
&\phantom{=\ } +\frac{(48(2g+1)(3g+2)z^2+(60g+42)z+1)P_{g+1}(z)}{(1-4z)^{3g+4,5}}.
\end{align}

Substituting formula~\eqref{eq:fg2} in formula~\eqref{eq:cg32}, we obtain the desired equality. 

Let's proof that the expression for  $P_g^{[3]}(z)$ calculated by formula~\eqref{eq:pg} is a polynomial with integer coefficients with~$\deg(P_g^{[3]})\le 3g+3$ and~$z^{2g+2}\mid P_g^{[3]}(z)$. We make use of   properties of polynomials~$P_h(z)$ and~$P_h^{[2]}(z)$ proved in~\cite{APRW1} and~\cite{APRW2}: these polynomials have integer coefficients,~$\deg(P_h)\le 3h-1$, $z^{2h}\mid P_h(z)$, $\deg(P_h^{[2]})\le 3h+1$ and~${z^{2h+1}\mid P_h^{[2]}(z)}$. Hence it follows that all summands in the formula~\eqref{eq:pg} are polynomials with integer coefficients of degree at most~$3g+4$ which are divisible by~$z^{2g+2}$. Therefore, $P_g^{[3]}(z)\in\Z[x]$, 
$\deg(P_g^{[3]})\le 3g+4$ and~$z^{2g+2}\mid P_g^{[3]}(z)$. Let~$a_{3g+2}$ is the coefficient of the polynomial~$P_{g+1}(z)$ at~$z^{3g+2}$. It is easy to see from formula~\eqref{eq:pg} that the coefficient of~$P_g^{[3]}(z)$ at~$z^{3g+4}$ is equal to
$$(-32(3g+2)(3g+1)+4(48g+20)(3g+2)-48(2g+1)(3g+2))a_{3g+2}=0,$$ whence it follows that $\deg(P_g^{[3]})\le 3g+3$.
\end{proof}

\begin{cor}
\label{cor:p3}
For $g=0,1,\ldots,4$ the polynomials~$P_g^{[3]}(z)$ are the following:
\begin{align*}
P_0^{[3]}(z)&=2z^2\, (4z+3),\\
P_1^{[3]}(z)&=12z^4\, \left(68\,z^2 + 207\,z + 45\right),\\
P_2^{[3]}(z)&=6z^6\, \left(27592\,z^3 + 197646\,z^2 + 137934\,z + 15015 \right),\\
P_3^{[3]}(z)&=8z^8\, \left(7468348\,z^4 + 98362965\,z^3 + \right.\\
&\qquad\qquad\qquad\left.+ 143262162\,z^2 + 46335375\,z + 3132675 \right),\\
P_4^{[3]}(z)&=90z^{10}\left(383244280\,z^5 + 8028110250\,z^4 + 20036503284\,z^3 + \right.\\
&\qquad\qquad\qquad\left.+12962876908\,z^2 + 2494416504\,z + 117515475\right).
\end{align*}
\end{cor}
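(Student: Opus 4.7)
The plan is to verify the five formulas by direct substitution into the recurrence~\eqref{eq:pg} of Theorem~\ref{th:C3}, using the explicit polynomials $P_h(z)$ for $h=1,\ldots,5$ and $P_h^{[2]}(z)$ for $h=0,1,\ldots,5$ listed in Remark~\ref{rem:p1p2}. For each fixed $g\in\{0,1,2,3,4\}$ the right hand side of~\eqref{eq:pg} depends only on $P_1,\ldots,P_{g+1}$ and $P_1^{[2]},\ldots,P_{g+1}^{[2]}$, all of which are known, so the result is a single polynomial identity to be checked.

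More concretely, I would proceed $g$ by $g$. First, compute $z^{-1}P_{g+1}^{[2]}(z)$; this is a polynomial since $z\mid P_{g+1}^{[2]}(z)$ (in fact $z^{2g+3}\mid P_{g+1}^{[2]}$). Next, form the convolution sum $2\sum_{h=1}^{g}P_h(z)P_{g+1-h}^{[2]}(z)$, which is empty for $g=0$ and has $g$ terms in general. Then compute $P_{g+1}'(z)$ and $P_{g+1}''(z)$ by term-by-term differentiation and multiply by the prefactors $2z^2(1-4z)^2$ and $((48g+20)z+5)z(1-4z)$, respectively. Finally multiply $P_{g+1}(z)$ by the quadratic prefactor $48(2g+1)(3g+2)z^2+(60g+44)z+1$, and add everything together with the signs indicated in~\eqref{eq:pg}.

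For the sanity check built into Theorem~\ref{th:C3} itself, recall that the coefficient at $z^{3g+4}$ cancels automatically (as shown in the last paragraph of the proof of Theorem~\ref{th:C3}), so after the dust settles one is left with a polynomial of degree at most $3g+3$ and divisible by $z^{2g+2}$, which matches the shape of each listed formula. For $g=0$ the recurrence simplifies a lot: the convolution is empty, $P_1(z)=z^2$, $P_1^{[2]}(z)=z^3(20z+21)$, and the prefactors specialize to concrete numbers, giving $P_0^{[3]}(z)=2z^2(4z+3)$ directly. For $g=1,2,3,4$ the calculation is completely analogous, only the arithmetic gets bulkier.

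The only obstacle is the size of the expressions for larger $g$: for $g=4$ the recurrence combines polynomials of degree up to $15$ with large integer coefficients, and four convolution terms have to be added. This is purely a bookkeeping task, most conveniently delegated to a computer algebra system; no new ideas are required, and each of the five formulas is verified by this single symbolic substitution.
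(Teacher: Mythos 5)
Your proposal is correct and is exactly the paper's proof: the paper's argument for Corollary~\ref{cor:p3} consists of the single sentence ``Substituting the formulas for $P_i(z)$ and $P_i^{[2]}(z)$ from remark~\ref{rem:p1p2} in the formula~\eqref{eq:pg}, we obtain the desired formulas,'' which is precisely the substitution you describe. Your additional remarks on the degree bound and the divisibility by $z^{2g+2}$ are a reasonable sanity check but add nothing beyond what is already established in Theorem~\ref{th:C3}.
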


\begin{proof}
Substituting the formulas for  $P_i(z)$ and $P_i^{[2]}(z)$ from remark~\ref{rem:p1p2} in the formula~\eqref{eq:pg}, we obtain the desired formulas.
\end{proof}

\begin{cor}[{see.~\cite[theorem 6]{PR}}]
\label{cor:eps03}
The numbers $\varepsilon_0(n,3)$ for $n\geq 0$ satisfy the following equation:
\begin{equation*}
\varepsilon_0(n,3)=\frac{(8n+5)(n-1)n(n+1)}{210}\,C_{2n+1}^{n}=\frac{8n+5}{35}\,C_{2n+1}^{n}C_{n+1}^{3}.
\end{equation*}
\end{cor}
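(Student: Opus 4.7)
The plan is to extract the coefficient of $z^n$ directly from the closed-form expression for $\mathbf{C}_0^{[3]}(z)$ supplied by Theorem~\ref{th:C3} and Corollary~\ref{cor:p3}. For $g=0$ these combine to give
$$\mathbf{C}_0^{[3]}(z)=\frac{P_0^{[3]}(z)}{(1-4z)^{9/2}}=\frac{6z^2+8z^3}{(1-4z)^{9/2}},$$
so the task reduces to computing $[z^m](1-4z)^{-9/2}$ for $m=n-2$ and $m=n-3$ and adding the two contributions with weights $6$ and $8$.

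The first step is a routine application of the generalized binomial series. Writing $\binom{-9/2}{m}(-4)^m$ in terms of the odd product $9\cdot 11\cdots(2m+7)$, re-expressing this as $(2m+7)!!/105$, and then converting the double factorial via $(2m+7)!!=(2m+7)!/(2^{m+3}(m+3)!)$, I expect to obtain the clean formula
$$[z^m](1-4z)^{-9/2}=\frac{(2m+7)!}{840\cdot m!\,(m+3)!}.$$

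In the second step I substitute $m=n-2$ and $m=n-3$ and factor the common quantity $\dfrac{(2n+1)!}{n!\,(n-2)!}$ out of the two resulting terms. Cancelling the factor $2(n+1)=2n+2$ inside $(2n+3)!$ against $(n+1)!$ in the first term, and rewriting $(n-3)!=(n-2)!/(n-2)$ in the second term, reduces the sum to a single fraction with denominator $210$:
$$\varepsilon_0(n,3)=\frac{(2n+1)!}{n!\,(n-2)!}\cdot\frac{3(2n+3)+2(n-2)}{210}=\frac{(8n+5)\,(2n+1)!}{210\,n!\,(n-2)!}.$$
The last step is to recognise the common factor either as $(n-1)n(n+1)\binom{2n+1}{n}$, yielding the first claimed form, or as $6\binom{2n+1}{n}\binom{n+1}{3}$ (using $3!\cdot 35=210$), yielding the second.

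There is no genuine conceptual obstacle: the argument is a coefficient extraction, and the only place requiring care is the factorial bookkeeping, particularly the double-to-single factorial conversion and the handling of edge cases. For $n\le 1$ both sides vanish --- the left-hand side by Remark~\ref{rem:zero} and the right-hand side through the factor $(n-1)n(n+1)$; and the formally problematic case $n=2$ (where $(n-3)!$ would appear in the denominator of the second summand) causes no difficulty, since $[z^{-1}](1-4z)^{-9/2}=0$ kills that term before any division by $(n-3)!$ takes place.
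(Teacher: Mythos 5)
Your proposal is correct and follows essentially the same route as the paper: substitute $P_0^{[3]}(z)=6z^2+8z^3$ into $\mathbf{C}_0^{[3]}(z)=P_0^{[3]}(z)/(1-4z)^{9/2}$ and extract the coefficient of $z^n$ from the generalized binomial expansion of $(1-4z)^{-9/2}$. The only cosmetic difference is that you convert the double factorial $(2m+7)!!$ into ordinary factorials before combining the two terms, whereas the paper keeps the $(2n+1)!!$ form until the final step; the arithmetic and the treatment of the degenerate cases $n\le 2$ agree.
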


\begin{proof}
Substituting the expression for $P_0^{[3]}(z)$ from  corollary~\ref{cor:p3} in formula~\eqref{C3}, we obtain the following:
\begin{equation}
\label{eq:c03}
 \mathbf{C}_0^{[3]}(z)=\frac{8z^3+6z^2}{(1-4z)^{4,5}}.
\end{equation}
Note, that
\begin{align}
\label{eq:-4,5}
 (1-4z)^{-4,5}&=\sum_{n\ge0}\binom{-4,5}{n}(-4z)^n=\sum_{n\ge0}\frac{4^n\cdot\frac{9}{2}\cdot\frac{11}{2}\cdot\ldots\cdot\frac{2n+7}{2}}{n!}\,z^n=\notag\\
 &= \sum_{n\ge0}\frac{2^n(2n+7)!!}{7!!\,n!}\,z^n.
\end{align}
Denote by $c_n$ the coefficient of this series at~$z^n$  (for~$n<0$ we set~$c_n=0$). Then by formulas~\eqref{eq:c03} and~\eqref{eq:-4,5} we obtain that
\begin{align*}
 \varepsilon_0(n,3)&=8c_{n-3}+6c_{n-2}=\\
 &= 8\cdot\dfrac{2^{n-3}(2n+1)!!(n-2)(n-1)n}{105n!}+6\cdot\dfrac{2^{n-2}(2n+3)!!(n-1)n}{105n!}=\\
 &= \frac{2^{n-1}(2n+1)!!(n-1)n(8n+5)}{105n!}=\frac{(8n+5)(n-1)n(n+1)}{210}\,C_{2n+1}^{n}.
\end{align*}
\end{proof}

\begin{cor}
\label{cor:eps13}
The numbers~$\varepsilon_1(n,3)$ for $n\geq 0$ satisfy the following equation:
\begin{equation*}
\varepsilon_1(n,3)=\frac{808n^2+99n-454}{3003}\,C_{2n+1}^{n}C_{n+1}^{5}.
\end{equation*}
\end{cor}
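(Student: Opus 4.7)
The plan is to mimic exactly the proof of corollary~\ref{cor:eps03}: start from the explicit value of $P_1^{[3]}(z)$ computed in corollary~\ref{cor:p3}, substitute into~\eqref{C3}, expand the denominator as a power series, extract the $z^n$ coefficient, and simplify.

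From corollary~\ref{cor:p3} we have $P_1^{[3]}(z)=540z^4+2484z^5+816z^6$, so by~\eqref{C3} with $g=1$,
$$\mathbf{C}_1^{[3]}(z)=\frac{540z^4+2484z^5+816z^6}{(1-4z)^{7,5}}.$$
Analogously to~\eqref{eq:-4,5} one computes
$$(1-4z)^{-7,5}=\sum_{n\ge0}\frac{2^n(2n+13)!!}{13!!\,n!}\,z^n,$$
so that if $c_n$ denotes the coefficient of $z^n$ in this series (and $c_n=0$ for $n<0$) then
$$\varepsilon_1(n,3)=540\,c_{n-4}+2484\,c_{n-5}+816\,c_{n-6}.$$

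The main algebraic step is to pull the common factor $\frac{2^{n-6}(2n+1)!!}{13!!\,(n-6)!}$ out of the three terms. Using $(2n+3)(2n+5)$ to pass from $(2n+1)!!$ to $(2n+5)!!$, and $(n-5)(n-4)$ to pass from $(n-6)!$ to $(n-4)!$, one gets
$$\varepsilon_1(n,3)=\frac{2^{n-6}(2n+1)!!}{13!!\,(n-4)!}\bigl[2160(2n+3)(2n+5)+4968(2n+3)(n-4)+816(n-5)(n-4)\bigr].$$
A short polynomial computation shows that the bracket equals $24(808n^2+99n-454)$; this is the one place where arithmetic slips are likely, and is in my view the only real obstacle. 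Then rewriting $(2n+1)!!=(2n+1)!/(2^n n!)$ and using $\binom{2n+1}{n}\binom{n+1}{5}=\frac{(2n+1)!}{120\,n!(n-4)!}$ together with $13!!=135135$ and $\frac{24}{135135\cdot 64}\cdot 120=\frac{1}{3003}$ yields the stated closed form.

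Finally, for $n<4$ the factor $\binom{n+1}{5}$ on the right-hand side vanishes, and on the left-hand side remark~\ref{rem:zero} gives $\varepsilon_1(n,3)=0$ since $k+2g-1=4$, so the equality holds trivially there and no separate argument is needed for small $n$.
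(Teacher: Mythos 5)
Your proposal is correct and follows essentially the same route as the paper's own proof: substitute $P_1^{[3]}(z)$ into~\eqref{C3}, expand $(1-4z)^{-7{,}5}$ via the double-factorial formula, extract coefficients, and verify that the bracket equals $24(808n^2+99n-454)$, which checks out. The only blemish is a wording slip where you describe the extracted common factor with $(n-6)!$ instead of $(n-4)!$, but your displayed formula and the subsequent arithmetic are consistent and correct.
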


\begin{proof}
Substituting the expression for $P_1^{[3]}(z)$ from  corollary~\ref{cor:p3} in formula~\eqref{C3}, we obtain the following:
\begin{equation*}
 \mathbf{C}_0^{[3]}(z)=\frac{816z^6+2484z^5+540z^4}{(1-4z)^{7,5}}.
\end{equation*}
Similarly to the proof of corollary~\ref{cor:eps03} we obtain that
\begin{equation*}
 (1-4z)^{-7,5}=\sum_{n\ge0}s_nz^n,
\end{equation*}
where $s_n=\frac{2^n(2n+13)!!}{13!!\,n!}$ and $s_n=0$ for $n<0$. Then
\begin{align*}
 \varepsilon_1(n,3)&=816s_{n-6}+2484s_{n-5}+540s_{n-4}=\\
 &= 816\cdot\frac{2^{n-6}(2n+1)!!(n-5)(n-4)(n-3)(n-2)(n-1)n}{13!!\,n!}+\\
 &\phantom{=\ } +2484\cdot\frac{2^{n-5}(2n+3)!!(n-4)(n-3)(n-2)(n-1)n}{13!!\,n!}+\\
 &\phantom{=\ } +540\cdot\frac{2^{n-4}(2n+5)!!(n-3)(n-2)(n-1)n}{13!!\,n!}=\\
 &= \frac{3\cdot2^{n-3}(2n+1)!!(n-3)(n-2)(n-1)n(808n^2+99n-454)}{13!!\,n!}=\\
 &= \frac{808n^2+99n-454}{3003}\,C_{2n+1}^{n}C_{n+1}^{5}.
\end{align*}
\end{proof}

\smallskip
Translated by D.\,V.\,Karpov.

\end{document}